\newtheorem{theorem}{Theorem}[section]
\newtheorem{lemma}[theorem]{Lemma}
\newtheorem{corollary}[theorem]{Corollary}
\theoremstyle{definition}
\newtheorem{definition}[theorem]{Definition}
\newtheorem{example}[theorem]{Example}
\theoremstyle{remark}
\newtheorem{remark}[theorem]{Remark}
\numberwithin{equation}{section}
\def\FV{\mathrm{FV}}
\def\OFV{\mathrm{OFV}}
\def\d{\mathbf{d}}
\begin{document}

\title{Formal differential variables and an abstract chain rule}

\author{Samuel Alexander}


\date{}

\begin{abstract} One shortcoming of the chain rule is that it does not iterate: it gives the derivative of $f(g(x))$, but not (directly) the second or higher-order derivatives. We present iterated differentials and a version of the multivariable chain rule which iterates to any desired level of derivative. We first present this material informally, and later discuss how to make it rigorous (a discussion which touches on formal foundations of calculus). We also suggest a finite calculus chain rule (contrary to Graham, Knuth and Patashnik's claim that ``there's no corresponding chain rule of finite calculus'').
\end{abstract}

\maketitle

\section{Introduction}


Consider the following statement, uncontroversial
in an elementary calculus context ($*$): ``For all variables $u$ and $v$,
$d(uv)=v\,du+u\,dv$.''
In his popular calculus textbook \cite{stewart}, Stewart says:
\begin{quote}
    ...the \textbf{differential} $dx$ is an independent variable...
\end{quote}
So if $*$ really does hold for \emph{all} variables $u$ and $v$,
and if $x$ is a variable, and if (as Stewart says) $dx$ is also a variable,
then, by letting $u=x$ and $v=dx$, we get
$d(x\,dx) = dx\,dx + x\,ddx$.
We do not know whether Stewart intended us to make such an unfamiliar-looking
conclusion from his innocent-looking statement, but let's continue along
these lines and see where it leads us.
We will formalize this kind of computation using machinery from first-order logic,
and show that it leads to an elegant higher-order multivariable chain rule.

A weakness of the familiar chain rule is that it does not iterate: it tells us
how to find the \emph{first} derivative of $f(g(x))$, but it does not
tell us how to find second- or higher-order derivatives of the same (at least
not directly). Our abstract chain rule will iterate: the exact same rule which
tells us $d f(g(x))$ will also tell us $d^k f(g(x))$
for any integer $k>1$.

Our $d$ operator has some similarities with the $\Delta$ operator of
Huang et al \cite{huang2006chain}. Our work improves on theirs in that we
explicitly distinguish differential variables from others, so that the operator
we develop better reveals the connection to differentials. For example,
in Huang et al, one has $\Delta_1 e^{x_0}=e^{x_0}x_1$ and
$\Delta_2 e^{x_0}=e^{x_0}(x_1^2+x_2)$, which is equivalent to
our $de^{x_0}=e^{x_0}\,dx_0$ and
$d^2e^{x_0}=e^{x_0}(dx_0\,dx_0+ddx_0)$.
Besides better emphasizing the connection to differentials, the latter version should
also be more familiar, since we already routinely write things like
$de^x=e^x\,dx$ in elementary calculus classes.

\section{Computing iterated partial derivatives: informal examples}
\label{funsection}

In this section, we will informally describe a way to compute iterated partial derivatives of a
multivariable function. We will make the method formal in subsequent sections.

\begin{example}
    \label{ddxsquareexample}
    Compute the differential $dd\,x^2=d(dx^2)$, treating differential variables just like
    ordinary variables.
\end{example}

\begin{proof}[Solution]
    The differential $dx^2=2x\,dx$ involves two variables: $x$ and $dx$.
    Thus, $d(dx^2)$ will have two terms, one where we differentiate with respect to
    $x$ and multiply the result by $dx$, and one where we differentiate with respect
    to $dx$ and multiply the result by $ddx$:
    \begin{align*}
        ddx^2
            &= d(dx^2)\\
            &= d(2x\,dx)\\
            &= \frac{\partial (2x\,dx)}{\partial x}\,dx
                + \frac{\partial (2x\,dx)}{\partial dx}\,ddx\\
            &= 2\,dx\,dx + 2x\,ddx.
    \end{align*}
    Note that when we compute $\frac{\partial (2x\,dx)}{\partial x}$,
    we treat $dx$ as a variable independent from $x$, so
    $dx$ can be treated as a constant. Likewise when we compute
    $\frac{\partial (2x\,dx)}{\partial dx}$, $x$ is treated as a constant.
\end{proof}

\begin{example}
    \label{ddexexample}
    Compute the differential $dd\,e^x$, treating differential variables
    just like ordinary variables.
\end{example}

\begin{proof}[Solution]
    As in Example \ref{ddxsquareexample}, since $d\,e^x=e^x\,dx$,
    \begin{align*}
        dd\,e^x &= d\, (e^x\,dx)\\
            &= \frac{\partial (e^x\,dx)}{\partial x}\,dx
                + \frac{\partial (e^x\,dx)}{\partial dx}\,ddx\\
            &= e^x\,dx\,dx + e^x\,ddx.
    \end{align*}
\end{proof}

\begin{example}
    \label{ddfxexample}
    Compute $dd\,f(x)$, treating differential variables just like
    ordinary variables.
\end{example}

\begin{proof}[Solution]
    Just as above,
    \begin{align*}
        dd\,f(x) &=
            d(f'(x)\,dx)\\
            &= \frac{\partial (f'(x)\,dx)}{\partial x}\,dx
                + \frac{\partial (f'(x)\,dx)}{\partial dx}\,ddx\\
            &=
                f''(x)\,dx\,dx + f'(x)\,ddx.
    \end{align*}
\end{proof}

In a later section, we will formalize and prove a formal chain rule
(Corollary \ref{iteratedcoolcorollary}). For now, we will state it informally:

\begin{remark}
    (Abstract Chain Rule, stated informally)
    Let $T$ and $U$ be expressions and let $x$ be a non-differential variable.
    Assume $T$, $U$, and all of their sub-expressions are everywhere infinitely
    differentiable. Then
    \[
        d(T[x|U]) = (dT)[x|U],
    \]
    where the operator $[x|U]$ works by simultaneously replacing all occurrences
    of $x$ by $U$, all occurrences of $dx$ by $dU$, all occurrences of
    $d^2x$ by $d^2U$, and so on.
\end{remark}

The Abstract Chain Rule can be stated in English: ``substituting first and then
applying $d$ gives the same result as applying $d$ first and then substituting,
provided that when one substitutes $U$ for $x$, one also substitutes $dU$ for $dx$
and so on.''

\begin{example}
    \label{ex2example}
    Compute $(e^{x^2})''$.
\end{example}

\begin{proof}[Solution]
    By Example \ref{ddfxexample}, $(e^{x^2})''$ is the $dx\,dx$-coefficient of
    $dd\,e^{x^2}$. We compute:
    \begin{align*}
        dd\,e^{x^2}
            &= dd \, (e^x[x|x^2])\\
            &= (dd\,e^x)[x|x^2]
                &\mbox{(Abstract Chain Rule)}\\
            &= (e^x\,dx\,dx + e^x\,ddx)[x|x^2]
                &\mbox{(Example \ref{ddexexample})}\\
            &=
                e^{x^2}\,d(x^2)\,d(x^2) + e^{x^2}\,dd(x^2)
                    &\mbox{(Substituting)}\\
            &=
                e^{x^2}\,(2x\,dx)^2 + e^{x^2}\,(2\,dx\,dx + 2x\,ddx)
                    &\mbox{(Example \ref{ddxsquareexample})}\\
            &= (4x^2+2)e^{x^2}\,dx\,dx + 2xe^{x^2}\,ddx.
    \end{align*}
    The answer is the above $dx\,dx$-coefficient:
    \[
        (e^{x^2})'' = (4x^2+2)e^{x^2}.
    \]
\end{proof}

Our Abstract Chain Rule works for multivariable and higher-order derivatives, too.

\begin{example}
    The iterated total derivative
    \[
        d^3 \sin xy = d^3(\sin x \, [x|xy]) = (d^3 \sin x)[x|xy]
    \]
    encodes:
    \begin{itemize}
        \item
        $\partial^3 \sin xy/\partial x^3$ as its $dx\,dx\,dx$-coefficient.
        \item
        $\partial^3 \sin xy/\partial y^3$ as its $dy\,dy\,dy$-coefficient.
        \item
        $
            \frac{\partial^3 \sin xy}{\partial x\partial y\partial y}
            =
            \frac{\partial^3 \sin xy}{\partial y\partial x\partial y}
            =
            \frac{\partial^3 \sin xy}{\partial y\partial y\partial x}
        $
        times $3$ as its
        $dx\,dy\,dy=dy\,dx\,dy=dy\,dy\,dx$-coefficient (the fact that there
        are three ways to write this coefficient is why we write ``times 3'').
    \end{itemize}
\end{example}

In Sections \ref{formalizingtermssection}--\ref{abstractchainrulesecn} we will
formalize and prove the Abstract Chain Rule.
But first, we will connect these higher-order differentials to a more
concrete higher-order chain rule known as Fa\`a di Bruno's formula,
and also show how the same ideas lead to a finite calculus chain rule.

\section{Fa\`a di Bruno's formula}
\label{faadibrunosecn}

Fa\`a di Bruno's formula, named after the 19th century Italian priest
Francesco Fa\`a di Bruno, is a formula for the higher derivatives of $f(g(x))$.
See
\cite{johnson2002curious} and
\cite{craik2005prehistory}
for the history of Fa\`a di Bruno's formula
(see also \cite{osborne2017first} for related work in category theory
by another ACMS presenter).
The formula can be stated combinatorially:
\[
    f(g(x))^{(n)} = \sum_{\pi\in\Pi_n} f^{(|\pi|)}(g(x))\prod_{B\in\pi}g^{(|B|)}(x)
\]
where $\pi$ ranges over the set $\Pi_n$ of all partitions of $\{1,\ldots,n\}$
(so for each such partition $\pi$, $B$ ranges over the blocks in $\pi$).

The ideas of Section \ref{funsection} offer an intuitive way to understand the above
formula\footnote{Shortly after presenting this argument at ACMS, we realized
that the argument can actually be applied directly, without using iterated differentials
at all, yielding a shockingly short elementary proof of Fa\`a di Bruno's formula.
Examining the literature, we found that the basic idea is already
known \cite{ma2009higher} \cite{hardy2006combinatorics}, but both published proofs which
we found are actually proofs of more complicated multivariable generalizations of
Fa\`a di Bruno's formula. For the single-variable special case, the idea
(essentially the same idea which we presented using iterated differentials at ACMS)
is so simple that it can be written with a single sentence \cite{onesentence}.}. For any
partition $\pi=\{B_1,\ldots,B_k\}$ of $\{1,\ldots,n\}$,
let $I(\pi)$ be the expression
\[
    I(\pi)=f^{(k)}(x) \,d^{|B_1|}x \,d^{|B_2|}x \,\cdots \,d^{|B_k|}x
\]
involving iterated differentials as in Section \ref{funsection}.
By an inductive argument, one can check that
\[
    d^{n}f(x) = \sum_{\pi\in\Pi_n} I(\pi)
\]
(for the inductive step, consider the different ways of obtaining a
partition $\pi'\in\Pi_{n+1}$
from a partition $\pi\in\Pi_n$: one can either add $\{n+1\}$ as a new block,
which corresponds to changing $f^{(k)}(x)$ to $f^{(k+1)}(x)dx$ when using
the product rule to calculate $dI(\pi)$; or one can add $n+1$ to existing block $B_i$
of $\pi$, which corresponds to changing $d^{|B_i|}x$ to $d^{|B_i|+1}x$
when using the product rule to calculate $dI(\pi)$).

By similar reasoning as in Examples \ref{ddfxexample} and \ref{ex2example},
$f(g(x))^{(n)}$ is the $(dx)^n$-coefficient of
$d^n f(g(x))=d^n (f(x)[x|g(x)])=(d^n f(x))[x|g(x)]$.
Thus $f(g(x))^{(n)}$ is the $(dx)^n$-coefficient of
\[
    \sum_{\pi\in\Pi_n} I(\pi)[x|g(x)]
    = \sum_{\pi\in\Pi_n} f^{(|\pi|)}(g(x))\prod_{B\in\pi} d^{|B|}g(x).
\]
One can check that $d^{|B|}g(x)=g^{(|B|)}(x)d^{|B|}x+o$ where $o$
is a sum of terms involving higher-order differentials (which can be
ignored because they contribute nothing to the $(dx)^n$-coefficient
we seek). Fa\`a di Bruno's formula follows.


\section{Application to finite calculus}

The ideas in this paper also lead to a chain rule for the so-called finite calculus.
The finite calculus is described in Section 2.6 of Graham, Knuth and Patashnik's
\emph{Concrete Mathematics} \cite{concrete}. In finite calculus, one defines an
operator $\Delta$ on functions by $\Delta f(x) = f(x+1)-f(x)$. This operator has
many surprising analogies with differentiation, but Graham et al
claim: ``there's no corresponding chain rule of finite calculus, 
because there's no nice form for $\Delta f(g(x))$.''
To the contrary, since $\Delta x = (x+1)-x=1$,
an equivalent way to write $\Delta f(x)$ is
\[
    \Delta f(x) = f(x+\Delta x) - f(x).
\]
One can then easily check that
\[
    \Delta f(g(x))
    = f(g(x+\Delta x))-f(g(x))
    = f(g(x) + \Delta g(x)) - f(g(x)),
\]
which can be expressed as a chain rule
\[
    \Delta (f(x)\llbracket x|g(x)\rrbracket)=(\Delta f(x))\llbracket x|g(x)\rrbracket,
\]
where $\llbracket x|g(x)\rrbracket$ operates by replacing $x$
by $g(x)$ and $\Delta x$ by $\Delta g(x)$.

Of course, to make this rigorous, it would be necessary to work in a formal
language so as to carefully track which ``$1$''s are ``$\Delta x$''s.
For example, if $f(x)=1/(1+x^2)$, we want $f(x)\llbracket x|g(x)\rrbracket$
to be $1/(1+g(x)^2)$, not $\Delta g(x)/(\Delta g(x)+g(x)^2)$, even though
$\Delta x=1$. We will not go through the necessary formalism in this paper,
but it would be very similar to the formalism required for the $d$ operator,
which we devote the whole rest of the paper to.

\section{Formalizing terms}
\label{formalizingtermssection}

In this section, we will formalize the terms (or expressions) of differential calculus.
We attempt to make this formalization self-contained. The machinery we develop here is
very similar to the machinery used to define terms in first-order logic, except
that we assume more structure on the set of variables than is assumed in first-order
logic.

Note that one could strongly argue that elementary calculus already implicitly operates
on terms, abusing language to call terms ``functions''. For example, $x\mapsto x^2$
and $y\mapsto y^2$ are two names for the exact same function. Yet, nevertheless, in
elementary calculus, the expressions $x^2$ and $y^2$ are \emph{not} interchangeable
\cite{hamkins}. Evidently, such discrepancies point to the fact that elementary calculus
really is done using formal terms, implicitly. In the following,
we make it explicit.

\begin{definition}
    (Variables) 
    We fix a set of \emph{variables} defined inductively as follows.
    \begin{enumerate}
        \item
        For the base step, we fix a countably infinite set $\{x_0,x_1,\ldots\}$ of
        distinct elements called
        \emph{precalculus variables}, and we declare them to be variables.
        \item
        Inductively, for every variable $v$, we fix a new variable $dv$, which we
        call a \emph{differential variable}; we do this in such a way as to
        satisfy the following requirement (we write $d^n v$
        for $ddd\cdots dv$ where $d$ occurs $n$ times):
        \begin{itemize}
            \item
            (Unique Readability)
            For all $n,m\in\mathbb N$, for all variables $v$ and $w$, if
            $d^nv$ is the same variable as $d^mw$, then $n=m$ and $v=w$.
        \end{itemize}
    \end{enumerate}
    We write $\mathscr V$ for the set of variables.
\end{definition}

Examples of variables include $x_1$, $x_{50}$, $dx_0$, $ddx_3$, $d^4 x_{50}$
(shorthand for $ddddx_{50}$), and so on.
The unique readability property guarantees that, for example,
$dx_1$ is not the same variable as
$dx_2$ or $ddx_3$ or $dddx_1$, etc. We allow $n$ or $m$ to be $0$ in the unique
readability requirement, so, for example, $x_1$ and $dx_1$ are not the same variable
(since $d^0x_1$ denotes $x_1$). Every variable
is either a precalculus variable (in which case it is $x_n$ for some $n\in\mathbb N$)
or a differential
variable (in which case it is $d^m x_n$ for some $n,m\in\mathbb N$ with $m>0$).

\begin{definition}
    (Constant symbols and function symbols)
    \begin{enumerate}
        \item
        We fix a distinct set $\{\overline r\}_{r\in\mathbb R}$ of \emph{constant symbols}
        for the real numbers. For any $r\in\mathbb R$, $\overline r$ is the \emph{constant
        symbol for $r$}.
        \item
        For every $n\in\mathbb N$ with $n>0$, we fix a distinct set $\{\overline f\}_f$
        of \emph{$n$-ary function symbols}, where $f$ ranges over the set of all
        functions from $\mathbb R^n$ to $\mathbb R$.
        For any such $f$, $\overline f$ is the \emph{$n$-ary function symbol for $f$}.
    \end{enumerate}
    We make these choices in such a way that no variable is a constant symbol, no
    variable is an $n$-ary function symbol (for any $n$), and no constant symbol is
    an $n$-ary function symbol (for any $n$).
\end{definition}

For example, the exponential function $\exp$ gives rise to a $1$-ary
(or \emph{unary}) function symbol $\overline{\exp}$.
The addition function $+$ gives rise to a $2$-ary (or \emph{binary})
function symbol $\overline{+}$.

\begin{definition}
    \label{termdefn}
    (Terms)
    We define the \emph{terms of differential calculus} (or simply \emph{terms})
    inductively as follows.
    \begin{enumerate}
        \item
        Every variable $v$ is a term.
        \item
        Every constant symbol is a term.
        \item
        For all $n\in\mathbb N$ ($n>0$),
        for every $f:\mathbb R^n\to\mathbb R$,
        for all terms $U_1,\ldots,U_n$,
        $\overline f(U_1,\ldots,U_n)$ is a term.
    \end{enumerate}
\end{definition}

Examples of terms include $\overline 5$, $\overline \pi$,
$x_1$, $dx_2$, $\overline{\sin}(x_1)$,
$\overline{+}(x_0,x_1)$, and so on.
We often abuse notation and suppress the overlines and possibly parentheses when
writing terms. For example, we might write $\sin x_0$ instead of $\overline{\sin}(x_0)$,
$\cos \pi$ instead of $\overline{\cos}(\overline\pi)$, and so on.
For certain well-known functions, we sometimes abuse notation further,
for example, writing:
\begin{itemize}
    \item $x_0+x_1$ instead of $\overline{+}(x_0,x_1)$;
    \item $2x_0$ instead of $\overline{\cdot}(\overline 2,x_0)$;
    \item $x_0\,dx_1$ instead of $\overline{\cdot}(x_0,dx_1)$;
    \item $x^2_0$ instead of $\overline{x\mapsto x^2}(x_0)$;
    \item $e^{x_1}$ instead of $\overline{exp}(x_1)$;
    \item $x_0\,dx_1 + x_1\,dx_0$ instead of
        $\overline{+}(\overline{\cdot}(x_0,dx_1),\overline{\cdot}(x_1,dx_0))$;
    \item and so on.
\end{itemize}
This should cause no confusion in practice.

\begin{definition}
    \label{terminterpdefn}
    (Term interpretation)
    \begin{itemize}
        \item
        By an \emph{assignment}, we mean a function $s:\mathscr V\to\mathbb R$
        (recall that $\mathscr V$ is the set of variables).
        \item
        Let $s$ be an assignment.
        For every term $T$, we define the \emph{interpretation} $T^s\in\mathbb R$
        of $T$ (according to $s$) by induction on term complexity as follows.
        \begin{enumerate}
            \item
            If $T$ is a constant symbol $\overline r$, then $T^s=r$.
            \item
            If $T$ is a variable $v$, then $T^s=s(v)$.
            \item
            If $T$ is $\overline f(U_1,\ldots,U_n)$ for some
            $f:\mathbb R^n\to\mathbb R$ and terms $U_1,\ldots,U_n$,
            then $T^s=f(U^s_1,\ldots,U^s_n)$.
        \end{enumerate}
    \end{itemize}
\end{definition}

For example, if $s(x_0)=5$, then $\overline{\exp}(x_0)^s=e^5$.
If $s(x_0)=9$ and $s(dx_0)=0.1$, then $(x\,dx)^s=9\cdot 0.1=0.9$.

\begin{definition}
    (Free variables)
    We define the \emph{free variables} $\FV(T)$ of a term $T$ as follows.
    \begin{enumerate}
        \item
        If $T$ is a constant symbol, then $\FV(T)=\emptyset$ (the empty set).
        \item
        If $T$ is a variable $v$, then $\FV(T)=\{v\}$.
        \item
        If $T$ is $\overline f(U_1,\ldots,U_n)$ for some $f:\mathbb R^n\to\mathbb R$
        and terms $U_1,\ldots,U_n$, then
        \[
            \FV(T) = \FV(U_1)\cup \cdots \cup \FV(U_n).
        \]
    \end{enumerate}
\end{definition}

For example, $\FV(\overline 5)=\emptyset$,
$\FV(x_6)=\{x_6\}$, $\FV(dx_2)=\{dx_2\}$ (note that $x_2$ is \emph{not}
a free variable of $dx_2$), $\FV(e^{x_0+x_1})=\{x_0,x_1\}$,
$\FV(x_1\,dx_2)=\{x_1,dx_2\}$.

\begin{lemma}
    \label{independenceofsvlemma}
    Suppose $T$ is a term, $v$ is a variable, and $s$ is an assignment.
    If $v\not\in \FV(T)$, then $T^s$
    does not depend on $s(v)$.
\end{lemma}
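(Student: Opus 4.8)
The plan is to first pin down what the informal phrase ``$T^s$ does not depend on $s(v)$'' should mean, and then prove the resulting precise statement by structural induction on $T$. I would interpret the conclusion as: whenever $s$ and $s'$ are assignments that agree on every variable other than $v$ (that is, $s(w) = s'(w)$ for all $w \in \mathscr V$ with $w \neq v$), we have $T^s = T^{s'}$. Equivalently, holding the values $s(w)$ for $w \neq v$ fixed and viewing $T^s$ as a function of the single real number $s(v)$, that function is constant. With this reformulation, the lemma falls to a routine induction on term complexity, using Definition \ref{termdefn} (the inductive definition of terms) together with Definition \ref{terminterpdefn} (term interpretation) and the definition of $\FV$.

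For the base cases: if $T$ is a constant symbol $\overline r$, then $T^s = r = T^{s'}$ no matter what the assignments are, so there is nothing to check. If $T$ is a variable $w$, then $\FV(T) = \{w\}$, so the hypothesis $v \notin \FV(T)$ forces $w \neq v$; hence $T^s = s(w) = s'(w) = T^{s'}$ since $s$ and $s'$ agree on all variables other than $v$.

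For the inductive step, suppose $T = \overline f(U_1,\ldots,U_n)$ for some $f : \mathbb{R}^n \to \mathbb{R}$ and terms $U_1,\ldots,U_n$, and assume the lemma holds for each $U_i$. Since $\FV(T) = \FV(U_1) \cup \cdots \cup \FV(U_n)$, the hypothesis $v \notin \FV(T)$ gives $v \notin \FV(U_i)$ for every $i$. Applying the induction hypothesis to each $U_i$ (with the same pair $s, s'$), we get $U_i^s = U_i^{s'}$ for all $i$, and therefore $T^s = f(U_1^s,\ldots,U_n^s) = f(U_1^{s'},\ldots,U_n^{s'}) = T^{s'}$, which closes the induction.

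I do not expect a genuine obstacle here; the only point requiring care is the opening move of translating ``does not depend on $s(v)$'' into the two-assignment formulation, after which each case is immediate from the corresponding clause of the definitions. It is worth noting that the Unique Readability property of variables is not needed for this argument, since the proof only consults $\FV(T)$ and the interpretation clauses and never dissects the internal structure of a variable.
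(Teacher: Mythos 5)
Your proof is correct and is exactly the structural induction on term complexity that the paper intends when it says, simply, ``By induction''; your preliminary step of formalizing ``does not depend on $s(v)$'' as agreement of $T^s$ and $T^{s'}$ for assignments differing only at $v$ is the right reading and the case analysis is routine and complete.
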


\begin{proof}
    By induction.
\end{proof}

\begin{definition}
    \label{semanticequivdefn}
    (Semantic equivalence)
    If $T$ and $U$ are terms, we declare $T\equiv U$ (and say that $T$ and $U$
    are \emph{semantically equivalent}) if for every assignment $s$,
    $T^s=U^s$.
\end{definition}

For example, $\sin(x_0+2\pi)\equiv \sin x_0$,
by which we mean
$\overline \sin(\overline +(x_0,\overline{2\pi})))\equiv\overline \sin(x_0)$.

\subsection{Formal derivatives}

\begin{definition}
    \label{orderedfreevarsdefn}
    (Ordered free variables)
    If $T$ is a term, we define the \emph{ordered free variables} $\OFV(T)$ to be
    the finite sequence whose elements are the free variables $\FV(T)$ of $T$ (each
    appearing exactly one time in the sequence), ordered such that:
    \begin{itemize}
        \item
        Whenever $0<n<m$ then $d^n x_i$ precedes $d^m x_j$.
        \item
        Whenever $0<i<j$ then $d^n x_i$ precedes $d^n x_j$.
    \end{itemize}
\end{definition}

For example,
\[
    \OFV(e^{x_1+x_3+x_2+x_2+x_{99}}\,dx_1\,d^3x_1\,dx_2\,d^{50}x_0)
    =
    (x_1, x_2, x_3, x_{99}, dx_1, dx_2, d^3x_1, d^{50}x_0).
\]

\begin{definition}
    \label{assignmentshiftdefn}
    If $s$ is an assignment, $w$ is a variable, and $r\in\mathbb R$,
    we write $s(w|r)$ for the assignment defined by
    \[
        s(w|r)(v) = \begin{cases}
            r & \mbox{if $v$ is $w$}\\
            s(v) & \mbox{otherwise}.
        \end{cases}
    \]
\end{definition}

In other words, $s(w|r)$ is the assignment which is identical to $s$ except that
it overrides $s$'s output on $w$, mapping $w$ to $r$ instead.

\begin{lemma}
    \label{assignmentequalsitselflemma}
    For any assignment $s$ and variable $v$, $s(v|s(v))=s$.
\end{lemma}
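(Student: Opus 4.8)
The plan is to prove the equality of the two assignments by invoking function extensionality: since an assignment is by definition a function $\mathscr V\to\mathbb R$, it suffices to show that $s(v|s(v))$ and $s$ agree on every variable $w\in\mathscr V$.

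First I would fix an arbitrary variable $w$ and unfold the definition of $s(v|s(v))$ using Definition \ref{assignmentshiftdefn}, specialized to $r=s(v)$. This gives a two-case split according to whether $w$ is the variable $v$ or not. In the case where $w$ is $v$, the definition yields $s(v|s(v))(w)=s(v)$, and since $w$ and $v$ are the same variable, $s(v)=s(w)$, so the two sides agree. In the case where $w$ is not $v$, the definition yields $s(v|s(v))(w)=s(w)$ directly, so again the two sides agree. Having handled both cases, I would conclude that $s(v|s(v))(w)=s(w)$ for every $w$, hence $s(v|s(v))=s$.

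There is no real obstacle here: the only subtlety worth a sentence is making explicit that we are using extensionality of functions (two assignments with the same domain and the same value at every point are literally the same object), rather than some weaker notion such as the semantic equivalence of Definition \ref{semanticequivdefn}. The lemma is a routine unfolding, recorded because it is used later to instantiate the substitution/chain-rule machinery.
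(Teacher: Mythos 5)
Your proof is correct and is exactly the routine extensionality argument the lemma calls for; the paper simply records the proof as ``Trivial,'' so your write-up just fills in the obvious details of the same approach.
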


\begin{proof}
    Trivial.
\end{proof}

\begin{definition}
    (Everywhere-differentiability)
    Let $T$ be a term, $w$ a variable.
    We say that \emph{$T$ is everywhere-differentiable with respect to $w$}
    if for every assignment $s$, the limit
    \[
        \lim_{h\to 0}\frac{T^{s(w|s(w)+h)}-T^s}{h}
    \]
    converges to a finite real number.
\end{definition}

\begin{lemma}
    \label{welldefinednesslemma}
    Let $T$ be a term with $\OFV(T)=(v_1,\ldots,v_n)\not=\emptyset$,
    and let $w$ be a variable.
    Assume $T$ is everywhere-differentiable with respect to $w$.
    For all $r_1,\ldots,r_n$, let
    \[
        f(r_1,\ldots,r_n) = \lim_{h\to 0}\frac{T^{s(w|s(w)+h)}-T^s}{h}
    \]
    where $s$ is some assignment such that each $s(v_i)=r_i$.
    Then $f:\mathbb R^n\to\mathbb R$ is well-defined.
\end{lemma}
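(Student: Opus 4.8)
The plan is to notice that the everywhere-differentiability hypothesis already does all of the analytic work: for each \emph{fixed} assignment $s$ the limit
\[
\lim_{h\to 0}\frac{T^{s(w|s(w)+h)}-T^s}{h}
\]
exists and equals a finite real number. So two things remain. First, at least one assignment $s$ with $s(v_i)=r_i$ for all $i$ exists: since each free variable of $T$ occurs exactly once in $\OFV(T)$, the $v_i$ are pairwise distinct, and one may simply set $s(v_i)=r_i$ (and $s$ arbitrary elsewhere). Second, and this is the only real content, the displayed limit must not depend on \emph{which} such $s$ is chosen. Granting both, $f\colon\mathbb R^n\to\mathbb R$ is a well-defined function.

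The key ingredient for independence is: $T^s$ depends only on the restriction of $s$ to $\FV(T)=\{v_1,\ldots,v_n\}$; that is, if $s$ and $s'$ are assignments with $s(v_i)=s'(v_i)$ for all $i$, then $T^s=T^{s'}$. This is essentially Lemma \ref{independenceofsvlemma} (equivalently, a direct induction on term complexity: constant symbols ignore $s$ entirely; a variable $v$ has $\FV(v)=\{v\}$; and for $\overline f(U_1,\ldots,U_n)$ each $\FV(U_j)\subseteq\FV(T)$, so the inductive hypothesis applies to every $U_j$ and hence to $f(U_1^s,\ldots,U_n^s)$).

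Now take two assignments $s,s'$ with $s(v_i)=s'(v_i)=r_i$ for all $i$. By the key fact $T^s=T^{s'}$. I claim also $T^{s(w|s(w)+h)}=T^{s'(w|s'(w)+h)}$ for every $h\neq 0$; by the key fact it suffices to check the two shifted assignments agree on each $v_i$. If $w$ is not among $v_1,\ldots,v_n$, then $v_i\neq w$, so $s(w|s(w)+h)(v_i)=s(v_i)=r_i=s'(v_i)=s'(w|s'(w)+h)(v_i)$. If instead $w=v_k$ for some $k$, then $s(w)=r_k=s'(w)$ by hypothesis, so both shifted assignments send $v_k$ to $r_k+h$ and send each $v_i$ with $i\neq k$ to $r_i$ (using that the $v_i$ are distinct, so $v_i\neq w$ there). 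Either way the shifted assignments agree on $\FV(T)$, proving the claim. Therefore
\[
\frac{T^{s(w|s(w)+h)}-T^s}{h}=\frac{T^{s'(w|s'(w)+h)}-T^{s'}}{h}
\]
for every $h\neq 0$, and letting $h\to 0$ shows the two limits coincide. Hence $f(r_1,\ldots,r_n)$ is independent of the chosen $s$, completing the proof.

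The argument is routine; the only place requiring care — the ``main obstacle,'' such as it is — is the case where the differentiation variable $w$ is itself one of the free variables $v_k$ of $T$. There the perturbation $s(w)\mapsto s(w)+h$ is ``visible'' to $T$, and one must verify that the two shifted assignments still agree on \emph{all} of $\FV(T)$; this works precisely because $s$ and $s'$ were assumed to agree on every $v_i$ (in particular on $w=v_k$, so both get shifted by the same $h$) and because the listing $\OFV(T)$ has no repetitions.
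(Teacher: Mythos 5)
Your proof is correct and follows the same route as the paper: the paper's proof simply cites Lemma \ref{independenceofsvlemma} and notes that $T$ has no free variables outside $\{v_1,\ldots,v_n\}$. You have just filled in the details the paper leaves implicit, in particular the (correct) case analysis on whether $w$ is among the $v_i$, which is where the shifted assignments must be checked to still agree on $\FV(T)$.
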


\begin{proof}
    In other words, for any $r_1,\ldots,r_n\in\mathbb R$,
    $f(r_1,\ldots,r_n)$ does not depend on the choice of $s$, as long
    as each $s(v_i)=r_i$. This follows from Lemma \ref{independenceofsvlemma}
    since $T$ has no free variables other than $v_1,\ldots,v_n$.
\end{proof}

\begin{definition}
    \label{termderivativedefn}
    If $T$ is a term
    with $\OFV(T)=(v_1,\ldots,v_n)$, $w$ is a variable, and $T$ is everywhere-differentiable
    with respect to $w$, then we define the \emph{derivative of $T$ with respect
    to $w$}, a term, written $\frac{\partial T}{\partial w}$, as
    \[
        \frac{\partial T}{\partial w} = \overline f(v_1,\ldots,v_n)
    \]
    where $f$ is as in Lemma \ref{welldefinednesslemma}.
    We define $\frac{\partial T}{\partial w}$ to be the term $\overline 0$ if
    $\FV(T)=\emptyset$.
\end{definition}

\begin{example}
    \label{termderivexamples}
    (Some example term derivatives)
    \begin{enumerate}
        \item
        $\partial x_0/\partial x_0\equiv 1$.
        \item
        $\partial x_0/\partial x_1\equiv 0$.
        \item
        $\partial x_0/\partial dx_0\equiv 0$.
        \item
        $\partial (e^{x_1x_2}\,dx_1)/\partial x_1
            \equiv x_2e^{x_1x_2}\,dx_1$.
    \end{enumerate}
\end{example}

\begin{proof}
(1) The function $f$ of Lemma \ref{welldefinednesslemma}
is
\[
    f(r)=\lim_{h\to 0}\frac{x_0^{s(x_0|s(x_0)+h)}-x_0^s}{h}
\]
(for any assignment $s$ with $s(x_0)=r$).
By Definitions \ref{terminterpdefn} and \ref{assignmentshiftdefn}
this simplifies to
$f(r)=\lim_{h\to 0}\frac{s(x_0)+h-s(x_0)}{h}=1$.
The claim follows.

(2) The function $f$ of Lemma \ref{welldefinednesslemma}
is
\[
    f(r)=\lim_{h\to 0}\frac{x_0^{s(x_1|s(x_1)+h)}-x_0^s}{h}
\]
(where $s(x_0)=r$).
This simplifies to
$f(r)=\lim_{h\to 0}\frac{s(x_0)-s(x_0)}{h}=0$.
The claim follows.

(3) Similar to (2).

(4) By Definition \ref{orderedfreevarsdefn},
$\OFV(e^{x_1x_2}\,dx_1) = (x_1,x_2,dx_1)$.
So, letting $v_1=x_1$, $v_2=x_2$, $v_3=dx_1$, the function $f$
of Definition \ref{welldefinednesslemma} is
\[
    f(r_1,r_2,r_3) = \lim_{h\to 0}
    \frac{
        (e^{x_1x_2}\,dx_1)^{s(v_1|s(v_1)+h)}
        -
        (e^{x_1x_2}\,dx_1)^s
    }h
\]
(where each $s(v_i)=r_i$).
By Definitions \ref{terminterpdefn} and \ref{assignmentshiftdefn}
this simplifies to
\[
    f(r_1,r_2,r_3)
    =\lim_{h\to 0}\frac{
        e^{(r_1+h)r_2}r_3 - e^{r_1r_2}r_3
    }h,
\]
which is $r_2e^{r_1r_2}r_3$ by calculus. The claim follows.
\end{proof}

Another way to prove Example \ref{termderivexamples} would be to use the following lemma.

\begin{lemma}
    \label{subtletechnicallemma}
    For each term $T$, variable $w$, and assignment $t$,
    if $T$ is everywhere-differentiable with respect to $w$, then
    \[
        \left(\frac{\partial T}{\partial w}\right)^t
        =
        \lim_{h\to 0}\frac{ T^{t(w|t(w)+h)} - T^t }h.
    \]
\end{lemma}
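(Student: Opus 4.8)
The plan is to peel back the definitions on both sides and observe that they coincide, handling separately the two cases in Definition \ref{termderivativedefn}. Note first that the hypothesis that $T$ is everywhere-differentiable with respect to $w$ is exactly what makes both sides meaningful: it guarantees that the limit on the right converges, and it is the standing assumption under which $\partial T/\partial w$ is defined at all.

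In the degenerate case $\FV(T)=\emptyset$, Definition \ref{termderivativedefn} stipulates $\partial T/\partial w = \overline 0$, so the left-hand side is $\overline 0^{\,t}=0$. On the right, since $w\notin\FV(T)$, Lemma \ref{independenceofsvlemma} gives $T^{t(w|t(w)+h)}=T^t$ for every $h$ (the two assignments differ only in their value on $w$), so the difference quotient is identically $0$ and its limit is $0$. The two sides agree.

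For the main case $\FV(T)\neq\emptyset$, write $\OFV(T)=(v_1,\ldots,v_n)$ with $n\geq 1$. Definition \ref{termderivativedefn} gives $\partial T/\partial w = \overline f(v_1,\ldots,v_n)$, where $f\colon\mathbb{R}^n\to\mathbb{R}$ is the function associated to $T$ and $w$, well-defined by Lemma \ref{welldefinednesslemma}. Applying the variable and function-symbol clauses of Definition \ref{terminterpdefn},
\[
\left(\frac{\partial T}{\partial w}\right)^{\!t} = \overline f(v_1,\ldots,v_n)^{\,t} = f\bigl(t(v_1),\ldots,t(v_n)\bigr).
\]
Then I invoke the defining formula for $f$ from Lemma \ref{welldefinednesslemma} at the point $(r_1,\ldots,r_n)=(t(v_1),\ldots,t(v_n))$: that lemma permits computing $f$ at this point using \emph{any} assignment $s$ with $s(v_i)=t(v_i)$ for all $i$, so in particular I may take $s=t$, obtaining
\[
f\bigl(t(v_1),\ldots,t(v_n)\bigr) = \lim_{h\to 0}\frac{T^{t(w|t(w)+h)}-T^t}{h}.
\]
Chaining the two displays yields the claim.

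The one point needing care — the reason the lemma is not merely a restatement of definitions — is the legitimacy of substituting $s=t$ into the formula defining $f$: the construction of $\partial T/\partial w$ packages the difference quotient into a genuine function $\mathbb{R}^n\to\mathbb{R}$, and it is precisely the well-definedness furnished by Lemma \ref{welldefinednesslemma} that lets us recover the difference quotient at the specific assignment $t$ when we interpret the resulting term at $t$. Beyond that observation, the argument is routine unwinding of Definitions \ref{terminterpdefn} and \ref{termderivativedefn}, and I anticipate no real obstacle.
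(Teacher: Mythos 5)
Your proof is correct and follows essentially the same route as the paper's: interpret $\overline f(v_1,\ldots,v_n)$ at $t$ to get $f(t(v_1),\ldots,t(v_n))$, then use the well-definedness from Lemma \ref{welldefinednesslemma} to evaluate $f$ there with the particular choice $s=t$. Your treatment of the $\FV(T)=\emptyset$ case via Lemma \ref{independenceofsvlemma} is a bit more explicit than the paper, which simply calls that case trivial.
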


\begin{proof}
    If $\FV(T)=\emptyset$, the lemma is trivial. Assume not.
    Let $(v_1,\ldots,v_n)=\OFV(T)$.
    By definition,
    $\frac{\partial T}{\partial w}=\overline f(v_1,\ldots,v_n)$,
    where $f:\mathbb R^n\to\mathbb R$ is such that for all
    $r_1,\ldots,r_n\in\mathbb R$, for any assignment $s$ with each $s(v_i)=r_i$,
    \[
        f(r_1,\ldots,r_n) = \lim_{h\to 0}
        \frac{T^{s(w|s(w)+h)}-T^s}h.
    \]

    In particular, let each $r_i=t(v_i)$. Then:
    \begin{align*}
        \left(\mbox{$\frac{\partial T}{\partial w}$}\right)^t
        &=
        \overline f(v_1,\ldots,v_n)^t
            &\mbox{(Definition \ref{termderivativedefn})}\\
        &=
        f(t(v_1),\ldots,t(v_n))
            &\mbox{(Definition \ref{terminterpdefn})}\\
        &=
        f(r_1,\ldots,r_n)
            &\mbox{(Choice of $r_1,\ldots,r_n$)}\\
        &=
        \lim_{h\to 0}\frac{ T^{t(w|t(w)+h)}-T^t }h,
            &\mbox{(Since each $t(v_i)=r_i$)}\\
    \end{align*}
    as desired.
\end{proof}

\begin{definition}
    (Term total differentials)
    Suppose $T$ is a term.
    We say $T$ is \emph{everywhere totally differentiable}
    if $T$ is everywhere-differentiable with respect to every variable.
    If so, we define the \emph{total differential} $\d T$, a term,
    as follows.
    If $\FV(T)=\emptyset$ then we define $\d T = \overline 0$.
    Otherwise, let $\OFV(T)=(v_1,\ldots,v_n)$ and define
    \[
        \d T
        =
        \frac{\partial T}{\partial v_1}
        dv_1 + \cdots + \frac{\partial T}{\partial v_n} dv_n.
    \]
    Furthermore, we inductively define $\d^1 T$ to be $\d T$ and,
    whenever $\d^n T$ is defined and is everywhere totally differentiable,
    we define $\d^{n+1} T=\d \d^n T$.
\end{definition}

For example,
\begin{align*}
\d (x_1\,dx_2) &= \frac{\partial (x_1\,dx_2)}{\partial x_1}dx_1
    + \frac{\partial (x_1\,dx_2)}{\partial dx_2}ddx_2\\
    &\equiv
    dx_1\,dx_2 + x_1\,ddx_2.
\end{align*}

\begin{lemma}
    \label{variablesubsetlemma}
    If term $T$ is everywhere totally differentiable and
    if $v_1,\ldots,v_n$ are distinct variables
    such that $\FV(T)\subseteq\{v_1,\ldots,v_n\}$, then
    \[
        \d T \equiv
        \frac{\partial T}{\partial v_1}dv_1 + \cdots + \frac{\partial T}{\partial v_n}dv_n.
    \]
\end{lemma}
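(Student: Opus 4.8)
The plan is to compare the ``official'' total differential $\d T$, computed using $\OFV(T) = (v_1', \ldots, v_m')$, with the claimed larger sum over the $v_1, \ldots, v_n$. Write $W = \{v_1, \ldots, v_n\}$, so $\FV(T) \subseteq W$ and the $v_i$ are distinct. If $\FV(T) = \emptyset$, then $\d T = \overline 0$ by definition, and each $\partial T/\partial v_i$ is also $\overline 0$ by Definition \ref{termderivativedefn}, so the right-hand side is $\overline 0 \cdot dv_1 + \cdots + \overline 0 \cdot dv_n$, which is semantically equivalent to $\overline 0$; this case is immediate, so assume $\FV(T) \neq \emptyset$.

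The key step is to handle the ``extra'' variables, namely those $v_i \in W \setminus \FV(T)$. First I would observe that for any such $v_i$, Lemma \ref{independenceofsvlemma} shows $T^s$ does not depend on $s(v_i)$, so the difference quotient defining $\partial T/\partial v_i$ is identically zero; hence $\partial T/\partial v_i \equiv 0$ (whether one reaches this through Lemma \ref{subtletechnicallemma} or directly from Lemma \ref{welldefinednesslemma}), and the corresponding summand $\frac{\partial T}{\partial v_i} dv_i$ contributes nothing under $\equiv$. So the right-hand side is semantically equivalent to the sub-sum over just those $v_i$ lying in $\FV(T)$. Since $\OFV(T)$ is exactly a reordering of $\FV(T)$ with each element appearing once, this sub-sum is a permutation of the summands of $\d T$ as given in the definition.

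The remaining point is that reordering the summands does not change the semantic equivalence class: addition is interpreted by the actual addition function on $\mathbb{R}$, which is commutative and associative, so by an easy induction on term structure (using Definition \ref{terminterpdefn}) any two terms built from the same multiset of summands via $\overline{+}$ have the same interpretation under every assignment. Assembling these observations gives
\[
    \frac{\partial T}{\partial v_1} dv_1 + \cdots + \frac{\partial T}{\partial v_n} dv_n
    \;\equiv\;
    \sum_{v_i \in \FV(T)} \frac{\partial T}{\partial v_i} dv_i
    \;\equiv\;
    \frac{\partial T}{\partial v_1'} dv_1' + \cdots + \frac{\partial T}{\partial v_m'} dv_m'
    \;=\; \d T,
\]
as desired. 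I expect the only mild obstacle to be the bookkeeping in the commutativity/associativity argument for reordering sums; everything else is a direct application of the lemmas already established, principally Lemma \ref{independenceofsvlemma} and the well-definedness of formal derivatives.
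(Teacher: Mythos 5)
Your proposal is correct and follows the same route as the paper's (much terser) proof, which likewise rests on exactly two facts: that $\frac{\partial T}{\partial v_i}\equiv\overline 0$ whenever $v_i\not\in\FV(T)$, and that commutativity of addition lets one reorder the remaining summands to match the order given by $\OFV(T)$. Your version simply spells out the details (the empty case, the appeal to Lemma \ref{independenceofsvlemma}, and the induction justifying reordering under $\equiv$) that the paper leaves implicit.
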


\begin{proof}
    Follows from the commutativity of addition and the fact that
    clearly $\frac{\partial T}{\partial v_i}\equiv \overline 0$ if $v_i\not\in\FV(T)$.
\end{proof}

In order to prove an abstract chain rule in Section \ref{abstractchainrulesecn},
we will need a form of the classical multivariable chain rule, expressed for formal
terms. For this purpose, we first introduce shorthand for finite summation
notation\footnote{It is also possible to incorporate summation notation
formally into Definition \ref{termdefn}, but the details are complicated.
See \cite{alexander2013first}.}.

\begin{definition}
    If $m>0$ is an integer and $T_1,\ldots,T_m$ are terms,
    we write $\sum_{i=1}^m T_i$ (or just $\sum_i T_i$ if
    no confusion results) as shorthand for $T_1+\cdots+T_m$.
\end{definition}

\begin{lemma}
    \label{technicallemma}
    (Classic Multivariable Chain Rule for Terms)
    Suppose $f:\mathbb R^n\to\mathbb R$.
    Suppose $\vec T=(T_1,\ldots,T_n)$ are terms with each
    $\FV(T_i)\subseteq \{v_1,\ldots,v_m\}$ (where $v_1,\ldots,v_m$ are distinct).
    Assume that $\overline f(\vec T)$ and $T_1,\ldots,T_n$ are
    everywhere totally differentiable.
    Then for all $1\leq i\leq m$,
    \[
        \frac{\partial (\overline f(\vec T))}{\partial v_i}
        \equiv
        \sum_{j=1}^n \overline{f_j}(\vec T)\frac{\partial T_j}{\partial v_i},
    \]
    where $f_j=D_j f$ (the partial derivative of $f$ (in the usual sense) with
    respect to its $j$th argument).
\end{lemma}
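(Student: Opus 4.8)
Since $T\equiv U$ means $T^{s}=U^{s}$ for every assignment $s$, the plan is to fix an arbitrary assignment $t$, show that the two sides have the same interpretation at $t$, and conclude. Write $t_h$ for the assignment $t(v_i|t(v_i)+h)$ (notation of Definition~\ref{assignmentshiftdefn}); note $t_0=t$ by Lemma~\ref{assignmentequalsitselflemma}. First I would apply Lemma~\ref{subtletechnicallemma} to the term $\overline f(\vec T)$, and then Definition~\ref{terminterpdefn}, to rewrite the left-hand side as
\[
    \left(\frac{\partial(\overline f(\vec T))}{\partial v_i}\right)^{t}
    =\lim_{h\to 0}\frac{(\overline f(\vec T))^{t_h}-(\overline f(\vec T))^{t}}{h}
    =\lim_{h\to 0}\frac{f(T_1^{t_h},\ldots,T_n^{t_h})-f(T_1^{t},\ldots,T_n^{t})}{h}.
\]

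The key step is to recognize this last expression as the derivative at $0$ of the composite $f\circ\gamma$, where $\gamma\colon\mathbb R\to\mathbb R^{n}$ is the curve $\gamma(h)=(T_1^{t_h},\ldots,T_n^{t_h})$. Here $\gamma(0)=(T_1^{t},\ldots,T_n^{t})$ by Lemma~\ref{assignmentequalsitselflemma}, and applying Lemma~\ref{subtletechnicallemma} once more to each $T_j$ (everywhere-differentiable with respect to $v_i$) shows that $\gamma$ is differentiable at $0$ with $\gamma_j'(0)=(\partial T_j/\partial v_i)^{t}$. Then the classical multivariable chain rule, applied to $f\circ\gamma$, gives
\[
    (f\circ\gamma)'(0)=\sum_{j=1}^{n}(D_jf)(\gamma(0))\,\gamma_j'(0)
    =\sum_{j=1}^{n}f_j\bigl(T_1^{t},\ldots,T_n^{t}\bigr)\left(\frac{\partial T_j}{\partial v_i}\right)^{t}.
\]
On the other side, unwinding the right-hand side of the lemma with Definition~\ref{terminterpdefn} (and the finite-summation shorthand) gives
\[
    \left(\sum_{j=1}^{n}\overline{f_j}(\vec T)\,\frac{\partial T_j}{\partial v_i}\right)^{t}
    =\sum_{j=1}^{n}f_j\bigl(T_1^{t},\ldots,T_n^{t}\bigr)\left(\frac{\partial T_j}{\partial v_i}\right)^{t},
\]
the same quantity; since $t$ was arbitrary, $\equiv$ follows. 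Everything outside the chain-rule step is a routine unwinding of the definitions of terms and their interpretations.

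The hard part --- and, I suspect, the only genuine obstacle --- is the appeal to the classical multivariable chain rule: that theorem requires $f$ to be (Fr\'echet-)differentiable at the point $\gamma(0)=(T_1^{t},\ldots,T_n^{t})$, and mere existence of the partials $f_j=D_jf$, even together with the everywhere-total-differentiability of $\overline f(\vec T)$ and of $T_1,\ldots,T_n$, does not supply this. For instance, take $f(x,y)=2x^{3}y/(x^{4}+y^{2})$ with $f(0,0)=0$ --- whose partials $D_1f$ and $D_2f$ exist everywhere --- together with $T_1=v_1$ and $T_2=\overline{x\mapsto x^2}(v_1)$. One checks that $(\overline f(\vec T))^{s}=s(v_1)$ for every assignment $s$, so $\overline f(\vec T)$ is everywhere totally differentiable; yet $\partial(\overline f(\vec T))/\partial v_1\equiv\overline 1$, while the purported right-hand side has interpretation $0$ at every assignment sending $v_1$ to $0$. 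Accordingly I would either add the hypothesis that $f$ is everywhere differentiable (or, matching the informal Abstract Chain Rule, everywhere $C^{\infty}$), or make explicit that ``everywhere totally differentiable'' is also being demanded of $f$; granting such a condition, the argument above has no further gaps.
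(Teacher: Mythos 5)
Your argument is, in substance, the paper's own proof. The paper fixes an assignment $s$, sets $F(z)=\overline f(\vec T)^{s(v_i|z)}$ and $G_j(z)=T_j^{s(v_i|z)}$, applies the classical multivariable chain rule to $F=f\circ(G_1,\ldots,G_n)$, and translates everything back into term derivatives via Lemma \ref{subtletechnicallemma} and Lemma \ref{assignmentequalsitselflemma}; your curve $\gamma(h)$ is just $(G_1(s(v_i)+h),\ldots,G_n(s(v_i)+h))$, so the two computations coincide up to reparametrization.

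The issue you flag is not a gap in your write-up but a genuine gap in the lemma and in the paper's proof. The paper dismisses the differentiability of $f$ with a parenthetical asserting that the hypotheses of the classical chain rule follow from the everywhere-total-differentiability of $\overline f(\vec T)$ and the $T_j$, via Lemma \ref{subtletechnicallemma}. That assertion is false: everywhere-total-differentiability of a term only guarantees the one-variable limits defining its partial derivatives, and gives no Fr\'echet differentiability of $f$ at the points $(T_1^s,\ldots,T_n^s)$, which is what the classical chain rule requires. Your counterexample checks out: with $f(x,y)=2x^3y/(x^4+y^2)$, $f(0,0)=0$, $T_1=v_1$, $T_2=\overline{x\mapsto x^2}(v_1)$, one has $f(x,x^2)=x$ identically, so $\overline f(\vec T)\equiv v_1$ is everywhere totally (indeed strongly) differentiable with $\partial(\overline f(\vec T))/\partial v_1\equiv\overline 1$, while $D_1f(0,0)=D_2f(0,0)=0$ forces the right-hand side to vanish at any assignment sending $v_1$ to $0$. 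So the lemma as stated is false, and a hypothesis on $f$ itself (everywhere differentiable, or $C^1$, in line with the informal ``everywhere infinitely differentiable'' of Section \ref{funsection}) must be added; with that addition your proof, and the paper's, goes through. The same repair is then needed in Theorem \ref{mainthm}, whose proof invokes this lemma for an arbitrary $f:\mathbb R^m\to\mathbb R$.
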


\begin{proof}
    Let $s$ be an assignment
    and fix $1\leq i\leq m$. We must show (Definition \ref{semanticequivdefn})
    that
    \[
        \left(
            \frac{\partial (\overline f(\vec T))}{\partial v_i}
        \right)^s
        =
        \left(
            \sum_{j=1}^n \overline{f_j}(\vec T)\frac{\partial T_j}{\partial v_i}
        \right)^s.
    \]
    Define functions $F, G_{j}:\mathbb R\to\mathbb R$
    ($1\leq j\leq n$)
    by
    \begin{align*}
        F(z) &= \overline{f}(\vec T)^{s(v_i|z)},\\
        G_{j}(z) &= T^{s(v_i|z)}_j.
    \end{align*}
    For all $1\leq j\leq n$ and $z\in\mathbb R$,
    \begin{align*}
        F(z) &= \overline{f}(\vec T)^{s(v_i|z)}
                &\mbox{(Definition of $F_i$)}\\
            &= f(T^{s(v_i|z)}_1,\ldots,T^{s(v_i|z)}_n)
                &\mbox{(Definition \ref{terminterpdefn})}\\
            &= f(G_{1}(z),\ldots,G_{n}(z)),
                &\mbox{(Definition of $G_{j}$)}\\
        \mbox{so ($*$) }F'(z)
            &= \mbox{$\sum_j$} f_j(G_{1}(z),\ldots,G_{n}(z))G'_{j}(z)
                &\mbox{(Classic multivar.\ chain rule)}
    \end{align*}
    (the hypotheses of the classic multivariable chain rule are implied by
    the everywhere-total-differentiability of $\overline f(\vec T)$ and each $T_i$,
    by Lemma \ref{subtletechnicallemma}).
    So armed, we compute:
    \begin{align*}
        \left(
            \mbox{$\frac{\partial (\overline f(\vec T))}{\partial v_i}$}
        \right)^s
        &=
            \lim_{h\to 0}\frac{\overline f(\vec T)^{s(v_i|s(v_i)+h)}-\overline f(\vec T)^s}h
            &\mbox{(Lemma \ref{subtletechnicallemma})}\\
        &=
            \lim_{h\to 0}\frac{F(s(v_i)+h)-F(s(v_i))}h
            &\mbox{(Def.\ of $F$)}\\
        &=
            F'(s(v_i))
            &\mbox{(Def.\ of $F'$)}\\
        &= \mbox{$\sum_j$}
            f_j(G_{1}(s(v_i)),\ldots,G_{n}(s(v_i)))G'_{j}(s(v_i))
            &\mbox{(By ($*$))}\\
        &= \mbox{$\sum_j$}
            f_j(T^{s(v_i|s(v_i))}_1,\ldots,T^{s(v_i|s(v_i))}_n)G'_{j}(s(v_i))
            &\mbox{(Def.\ of $G_{j}$)}\\
        &= \mbox{$\sum_j$} f_j(T^s_1,\ldots,T^s_n)G'_{j}(s(v_i))
            &\mbox{(Lemma \ref{assignmentequalsitselflemma})}\\
        &= \mbox{$\sum_j$} f_j(T^s_1,\ldots,T^s_n)
            \lim_{h\to 0}\frac{G_{j}(s(v_i)+h)-G_{j}(s(v_i))}h
            &\mbox{(Def.\ of $G'_{j}$)}\\
        &= \mbox{$\sum_j$} f_j(T^s_1,\ldots,T^s_n)
            \lim_{h\to 0}\frac{T^{s(v_i|s(v_i)+h)}-T^{s(v_i|s(v_i))}}h
            &\mbox{(Def.\ of $G_{j}$)}\\
        &= \mbox{$\sum_j$} f_j(T^s_1,\ldots,T^s_n)
            \lim_{h\to 0}\frac{T^{s(v_i|s(v_i)+h)}-T^{s}}h
            &\mbox{(Lemma \ref{assignmentequalsitselflemma})}\\
        &= \mbox{$\sum_j$} f_j(T^s_1,\ldots,T^s_n)
            \left(\mbox{$\frac{\partial T_j}{\partial v_i}$}\right)^s
            &\mbox{(Lemma \ref{subtletechnicallemma})}\\
        &=
            \left(
                \mbox{$\sum_{j=1}^n$}
                    \overline{f_j}(\vec T)
                    \mbox{$\frac{\partial T_j}{\partial v_i}$}
            \right)^s,
            &\mbox{(Def.\ \ref{terminterpdefn})}
    \end{align*}
    as desired.
\end{proof}

Note that in Lemma \ref{technicallemma} the assumption that $\overline f(\vec T)$
is everywhere totally differentiable does not automatically imply that
$T_1,\ldots,T_n$ are everywhere totally differentiable.
For example, $f$ could be the function $f(x,y)=x$
in which case $f(T_1,T_2)$ would be everywhere totally differentiable iff $T_1$
is everywhere totally differentiable, regardless of the behavior of $T_2$.

\section{An Abstract Chain Rule}
\label{abstractchainrulesecn}

Recall that $\mathscr V$ denotes the set of all variables. Let $\mathscr T$ denote the
set of all terms.

\begin{definition}
    \label{extensiondefn}
    For any $\phi_0:\mathscr V\to\mathscr T$, the \emph{extension of $\phi_0$ to all terms}
    is the function $\phi:\mathscr T\to\mathscr T$ defined by induction as follows:
    \begin{enumerate}
        \item If $T$ is a constant symbol then $\phi(T)=T$.
        \item If $T$ is a variable then $\phi(T)=\phi_0(T)$.
        \item If $T$ is $\overline f(S_1,\ldots,S_n)$ then
            $\phi(T)=\overline f(\phi(S_1),\ldots,\phi(S_n))$.
    \end{enumerate}
\end{definition}

\begin{lemma}
    \label{substitutionlemma}
    Let $\phi_0:\mathscr V\to\mathscr T$ and
    let $\phi$ be the extension of $\phi_0$ to all terms. Then:
    \begin{enumerate}
        \item
        (The Substitution Lemma)
        For any assignment $s$, if $\phi(s)$ is the assignment defined by
        $\phi(s)(v)=\phi(v)^s$, then for every term $T$,
        $\phi(T)^s=T^{\phi(s)}$.
        \item
        For all terms $T$ and $U$, if $T\equiv U$ then $\phi(T)\equiv\phi(U)$.
    \end{enumerate}
\end{lemma}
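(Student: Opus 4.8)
The plan is to prove (1) by induction on the structure of the term $T$ (following Definition \ref{termdefn}), and then to obtain (2) as a quick corollary of (1), using the fact that $\phi(s)$ is itself an assignment. Throughout, the only things I expect to need are the recursive clauses of Definition \ref{extensiondefn}, the recursive clauses of term interpretation (Definition \ref{terminterpdefn}), and — for the variable case — the observation that $\phi$ agrees with $\phi_0$ on variables, so that $\phi(v)=\phi_0(v)$ for every variable $v$.

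For the induction in (1): if $T$ is a constant symbol $\overline r$, then $\phi(T)=T$ by Definition \ref{extensiondefn}(1), and both $\phi(T)^s$ and $T^{\phi(s)}$ equal $r$ by Definition \ref{terminterpdefn}(1). If $T$ is a variable $v$, then $\phi(T)=\phi_0(v)=\phi(v)$, while $T^{\phi(s)}=v^{\phi(s)}=\phi(s)(v)$, which by the definition of $\phi(s)$ given in the statement is exactly $\phi(v)^s$; hence $\phi(T)^s=\phi(v)^s=T^{\phi(s)}$. For the inductive step, suppose $T$ is $\overline f(S_1,\ldots,S_n)$ and the claim holds for each $S_i$. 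Then by Definition \ref{extensiondefn}(3) and Definition \ref{terminterpdefn}(3),
\[
    \phi(T)^s = \overline f(\phi(S_1),\ldots,\phi(S_n))^s = f\bigl(\phi(S_1)^s,\ldots,\phi(S_n)^s\bigr) = f\bigl(S_1^{\phi(s)},\ldots,S_n^{\phi(s)}\bigr) = T^{\phi(s)},
\]
where the third equality is the inductive hypothesis applied to each $S_i$ and the last is Definition \ref{terminterpdefn}(3) again. This completes the induction.

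For (2): assume $T\equiv U$ and let $s$ be an arbitrary assignment. By part (1), $\phi(T)^s=T^{\phi(s)}$ and $\phi(U)^s=U^{\phi(s)}$. Since $\phi(s)$ is a function $\mathscr V\to\mathbb R$, hence an assignment, and $T\equiv U$ asserts $T^t=U^t$ for \emph{every} assignment $t$, we may take $t=\phi(s)$ to conclude $T^{\phi(s)}=U^{\phi(s)}$, so $\phi(T)^s=\phi(U)^s$; as $s$ was arbitrary, $\phi(T)\equiv\phi(U)$. I do not anticipate a genuine obstacle here: this is the classical substitution lemma of first-order logic adapted to the present setting, and the only point needing a little care is bookkeeping — keeping straight that $\phi$ restricts to $\phi_0$ on variables, and that the auxiliary object $\phi(s)$ really is a total assignment, so that part (1) is legitimately applicable to it inside the proof of part (2).
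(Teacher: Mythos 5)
Your proof is correct and follows essentially the same route as the paper's: induction on term structure for part (1), with the function-symbol case handled by exactly the same chain of equalities, and part (2) obtained by applying part (1) twice and instantiating semantic equivalence at the assignment $\phi(s)$. The only difference is that you spell out the base cases (constant symbol and variable), which the paper dismisses as trivial; your treatment of the variable case, unwinding $T^{\phi(s)}=\phi(s)(v)=\phi(v)^s=\phi(T)^s$, is exactly the right bookkeeping.
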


\begin{proof} (1) By induction on $T$. If $T$ is a constant symbol or variable, the
    claim is trivial. Otherwise, $T$ is $\overline f(U_1,\ldots,U_n)$.
    Then
    \begin{align*}
        \phi(T)^s
        &= \overline f(\phi(U_1),\ldots,\phi(U_n))^s
            &\mbox{(Definition \ref{extensiondefn})}\\
        &= f(\phi(U_1)^s,\ldots,\phi(U_n)^s)
            &\mbox{(Definition \ref{terminterpdefn})}\\
        &= f(U_1^{\phi(s)},\ldots,U_n^{\phi(s)})
            &\mbox{(Induction)}\\
        &= T^{\phi(s)}.
            &\mbox{(Definition \ref{terminterpdefn})}
    \end{align*}
    \item (2) Assume $T\equiv U$. For any assignment $s$,
    if $\phi(s)$ is as in (1), then
    $T^{\phi(s)}=U^{\phi(s)}$ by Definition \ref{semanticequivdefn}.
    Thus $\phi(T)^s=\phi(U)^s$ by (1).
    By arbitrariness of $s$, $\phi(T)\equiv\phi(U)$.
\end{proof}

\begin{definition}
    Say $\phi_0:\mathscr V\to\mathscr T$ \emph{respects $d$}
    if for each variable $v$, $\phi_0(d v)\equiv\d \phi_0(v)$.
\end{definition}

\begin{definition}
    (Strong differentiability)
    \begin{enumerate}
        \item
        We define the \emph{subterms} of a term $T$ by induction as follows.
        If $T$ is a variable or constant symbol, then $T$ is its own lone
        subterm. If $T$ is $\overline f(U_1,\ldots,U_n)$, then the subterms
        of $T$ are $T$ itself along with the subterms of each $U_i$.
        \item
        A term $T$ is \emph{strongly differentiable} if every subterm of
        $T$ is everywhere totally differentiable.
    \end{enumerate}
\end{definition}

Thus, a term is strongly differentiable if it is built up from pieces which are
everywhere totally differentiable. An example of a term which is everywhere totally
differentiable but not strongly differentiable is $|x_0|^2$, which is
everywhere totally differentiable despite having a subterm $|x_0|$ which is not.
Note that the ordinary chain rule for $f(g(x))'$ fails when $f(x)=x^2$ and $g(x)=|x|$
(these functions fail the chain rule's hypotheses):
$(|x|^2)'=2x$, but $|x|'$ is undefined at $x=0$.
We avoid such traps in the following theorem by requiring strong differentiability.

\begin{theorem}
    \label{mainthm}
    (General Abstract Chain Rule)
    Let $\phi_0:\mathscr V\to\mathscr T$ and assume that $\phi_0(v)$
    is strongly differentiable for every variable $v$.
    Let $\phi$ be the extension of $\phi_0$ to all terms.
    If $T$ is strongly differentiable and $\phi_0$ respects $d$, then
    $\d \phi(T) \equiv \phi(\d T)$.
\end{theorem}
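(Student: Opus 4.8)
The plan is to argue by structural induction on $T$. Before starting the induction, I would record the following routine corollary of Lemmas~\ref{variablesubsetlemma} and~\ref{technicallemma} --- essentially the classical multivariable chain rule rewritten in the language of total differentials of terms: if $S = \overline{f}(A_1,\dots,A_n)$ is a term such that $S$ and each $A_j$ are everywhere totally differentiable, then
\[
  \d S \;\equiv\; \sum_{j=1}^n \overline{f_j}(A_1,\dots,A_n)\,\d A_j ,
\]
where $f_j = D_j f$. To prove it, choose distinct variables $w_1,\dots,w_p$ with $\bigcup_j \FV(A_j) \subseteq \{w_1,\dots,w_p\}$; expand $\d S$ using Lemma~\ref{variablesubsetlemma}, rewrite each $\partial S/\partial w_k$ using Lemma~\ref{technicallemma}, swap the two finite sums, and recombine the inner sums into $\d A_j$ using Lemma~\ref{variablesubsetlemma} once more. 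I will refer to this as the \emph{term chain rule}.

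For the base cases: if $T$ is a constant symbol $\overline r$, then $\phi(T) = \overline r$, and both sides of the desired equivalence are $\overline 0$. If $T$ is a variable $v$, then $\OFV(v) = (v)$ and $\partial v/\partial v \equiv \overline 1$ (as computed in Example~\ref{termderivexamples}), so $\d v \equiv dv$; hence $\phi(\d v) \equiv \phi(dv) = \phi_0(dv)$ by Lemma~\ref{substitutionlemma}(2), and $\phi_0(dv) \equiv \d\phi_0(v) = \d\phi(v)$ because $\phi_0$ respects $d$. Thus the variable case is precisely where the ``respects $d$'' hypothesis is used.

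For the inductive step, write $T = \overline{f}(U_1,\dots,U_n)$. Since $T$ is strongly differentiable, so is each $U_j$, and the induction hypothesis gives $\d\phi(U_j) \equiv \phi(\d U_j)$ for every $j$. Applying the term chain rule to $S = \phi(T) = \overline{f}(\phi(U_1),\dots,\phi(U_n))$ and then the induction hypothesis,
\[
  \d\phi(T) \;\equiv\; \sum_{j=1}^n \overline{f_j}(\phi(U_1),\dots,\phi(U_n))\,\d\phi(U_j) \;\equiv\; \sum_{j=1}^n \overline{f_j}(\phi(U_1),\dots,\phi(U_n))\,\phi(\d U_j).
\]
By Definition~\ref{extensiondefn}, $\phi$ commutes with every function symbol, so the last expression is literally $\phi\bigl(\sum_{j=1}^n \overline{f_j}(U_1,\dots,U_n)\,\d U_j\bigr)$. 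On the other hand, the term chain rule applied to $S = T$ itself (legitimate because strong differentiability of $T$ makes $T$ and each $U_j$ everywhere totally differentiable) gives $\d T \equiv \sum_j \overline{f_j}(U_1,\dots,U_n)\,\d U_j$, whence $\phi(\d T) \equiv \phi\bigl(\sum_j \overline{f_j}(U_1,\dots,U_n)\,\d U_j\bigr)$ by Lemma~\ref{substitutionlemma}(2). Transitivity of $\equiv$ now yields $\d\phi(T) \equiv \phi(\d T)$.

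The main obstacle is not this computation but the differentiability bookkeeping it rests on. Writing $\d\phi(T)$ or $\d\phi(U_j)$ at all presupposes that $\phi(T)$ and each $\phi(U_j)$ are everywhere totally differentiable, and the term chain rule demands exactly this of $\phi(T)$ and the $\phi(U_j)$ as a hypothesis. Strong differentiability of $T$ and of the $\phi_0(v)$ does not by itself force $\phi(T)$ to be everywhere totally differentiable --- substitution can identify variables and thereby spoil total differentiability even when all ingredients are well-behaved, the same subtlety noted in the remark following Lemma~\ref{technicallemma}. So the induction should really be run with the extra clause ``$\phi(T)$ is everywhere totally differentiable'' (or, more safely, ``strongly differentiable'') added to both the hypothesis and the conclusion, and one must either verify this clause under a suitable additional assumption or fold it into the statement of the theorem. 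I would settle this point first as a separate lemma; with it available, the two base cases and the single inductive step above go through as written.
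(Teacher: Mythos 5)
Your argument is essentially the paper's own proof: the same structural induction, with the variable case reduced to the ``respects $d$'' hypothesis via $\d v\equiv dv$ and Lemma~\ref{substitutionlemma}(2), and the inductive step carried out by expanding $\d$ with Lemma~\ref{variablesubsetlemma}, invoking Lemma~\ref{technicallemma}, exchanging the two finite sums, and recombining; you have merely packaged that computation as a standalone ``term chain rule'' and met in the middle instead of running one long chain of equivalences. The differentiability bookkeeping you flag at the end is a genuine subtlety, but it is not a defect of your write-up relative to the paper: the paper's proof likewise applies Lemmas~\ref{variablesubsetlemma} and~\ref{technicallemma} to $\phi(\overline f(\vec T))$ and the $\phi(T_j)$ without verifying that these are everywhere totally differentiable, and you are right that the stated hypotheses do not force this. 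For instance, take $f(x,y)=xy/(x^2+y^2)$ with $f(0,0)=0$; then $\overline f(x_0,x_1)$ is strongly differentiable (both partials of $f$ exist everywhere), yet if $\phi_0$ sends both $x_0$ and $x_1$ to $x_2$ (and $d^kx_0, d^kx_1$ to $\d^k x_2$), the term $\phi(\overline f(x_0,x_1))=\overline f(x_2,x_2)$ fails to be everywhere-differentiable with respect to $x_2$ at $s(x_2)=0$, so $\d\phi(T)$ is not even defined. Your proposed remedy --- carrying ``$\phi(T)$ is everywhere totally (or strongly) differentiable'' through the induction, or adding it as an explicit hypothesis --- is the right instinct; the theorem should be read as tacitly assuming whatever is needed for the differentials appearing in its conclusion to exist.
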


\begin{proof}
    By induction on $T$.
    If $T$ is a constant symbol, the theorem is trivial.
    If $T$ is a variable, the theorem reduces to the statement that $\phi_0$
    respects $d$, which is one of the hypotheses.
    It remains to consider the case when $T$ is
    $\overline f(\vec T)$ where $f:\mathbb R^m\to\mathbb R$
    and $\vec T=T_1,\ldots,T_m$ are
    simpler terms.
    Then $T_1,\ldots,T_m$ are subterms of $T$, so, since $T$ is strongly
    differentiable, it follows that $T_1,\ldots,T_m$ are strongly differentiable.
    By induction, each $\d \phi(T_i) \equiv \phi(\d T_i)$.
    Let $\{v_1,\ldots,v_\ell\}=\FV(\phi(T_1))\cup \cdots \cup \FV(\phi(T_m))$.
    For the rest of the proof, whenever $S$ is a term and $v$ is a variable, we will write
    $S_v$ for $\frac{\partial S}{\partial v}$.
    Let $\overrightarrow{\phi(T)}$
    denote
    $\phi(T_1),\ldots,\phi(T_m)$.
    We calculate:
    \begin{align*}
        {} & \d \phi(\overline f(\vec T))\\
        &\equiv \mbox{$\sum_{i=1}^\ell$} \phi(\overline f(\vec T))_{v_i}dv_i
            &\mbox{(Lemma \ref{variablesubsetlemma})}\\
        &= \mbox{$\sum_i$} \overline f(\overrightarrow{\phi(T)})_{v_i}dv_i
            &\mbox{(Definition \ref{extensiondefn})}\\
        &\equiv
            \mbox{$\sum_i$} \mbox{$\sum_{j=1}^m$} \overline{f_j}(\overrightarrow{\phi(T)})
                \phi(T_j)_{v_i}dv_i
                &\mbox{(Lemma \ref{technicallemma})}\\
        &\equiv
            \mbox{$\sum_j$} \overline{f_j}(\overrightarrow{\phi(T)})
            \mbox{$\sum_i$} \phi(T_j)_{v_i} dv_i
                &\mbox{(Basic algebra)}\\
        &\equiv
            \mbox{$\sum_j$} \overline{f_j}(\overrightarrow{\phi(T)})
            \d \phi(T_j)
                &\mbox{(Lemma \ref{variablesubsetlemma})}\\
        &\equiv
            \mbox{$\sum_j$} \overline{f_j}(\overrightarrow{\phi(T)})
            \phi(\d T_j)
                &\mbox{(Induction Hypothesis)}\\
        &=
            \phi\left(
                \mbox{$\sum_j$}
                \overline{f_j}(\vec T)
                \d T_j
            \right)
                &\mbox{(Definition \ref{extensiondefn})}\\
        &\equiv
            \phi\left(
                \mbox{$\sum_j$} \overline{f_j}(\vec T)
                \mbox{$\sum_{i=1}^\ell$} (T_j)_{v_i}\,dv_i
            \right)
                &\mbox{(Lemma \ref{variablesubsetlemma})}\\
        &\equiv
            \phi\left(
                \mbox{$\sum_i$} \mbox{$\sum_j$} \overline{f_j}(\vec T)(T_j)_{v_i}\,dv_i
            \right)
                &\mbox{(Basic algebra)}\\
        &\equiv
            \phi(\mbox{$\sum_i$}
                \overline{f}(\vec T)_{v_i}
            dv_i)
                &\mbox{(Lemma \ref{technicallemma})}\\
        &\equiv
            \phi( \d \overline{f}(\vec T))
                &\mbox{(Lemma \ref{variablesubsetlemma})}
    \end{align*}
    (in the last few lines, we use Lemma \ref{substitutionlemma} part 2).
\end{proof}

A weakness of the familiar chain rule is that it does not iterate.
The following corollary shows that the abstract chain rule does iterate.

\begin{corollary}
    For all $\phi_0$, $\phi$ and $T$ as in Theorem \ref{mainthm},
    for all $k\in\mathbb N$ ($k>0$),
    if $\d^{\ell} T$ exists and is strongly differentiable for all $\ell<k$, then
    \[
        \d^k \phi(T) \equiv \phi(\d^k T).
    \]
\end{corollary}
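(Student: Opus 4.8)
The plan is to prove this by induction on $k$, using Theorem \ref{mainthm} as the base case and as the engine for the inductive step. For $k=1$ the statement is exactly Theorem \ref{mainthm} (the hypothesis ``$\d^{\ell} T$ exists and is strongly differentiable for all $\ell < 1$'' just says $\d^0 T = T$ is strongly differentiable, which is the standing hypothesis on $T$). So assume $k > 1$ and that the result holds for $k-1$.

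For the inductive step, I would first apply the induction hypothesis to get $\d^{k-1}\phi(T) \equiv \phi(\d^{k-1}T)$, using that $\d^{\ell}T$ is strongly differentiable for all $\ell < k-1$ (which follows from the hypothesis for $k$). Then I would apply $\d$ to both sides. On the right, $\d \phi(\d^{k-1}T) \equiv \phi(\d\,\d^{k-1}T) = \phi(\d^k T)$ by a second application of Theorem \ref{mainthm}, this time with $T$ replaced by the term $\d^{k-1}T$ — which is legitimate precisely because $\d^{k-1}T$ is strongly differentiable (the $\ell = k-1$ instance of the hypothesis) and $\phi_0$ still respects $d$ and still sends variables to strongly differentiable terms. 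Combining, $\d^k\phi(T) = \d\,\d^{k-1}\phi(T) \equiv \d\,\phi(\d^{k-1}T) \equiv \phi(\d^k T)$, as desired.

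The one subtlety I need to pin down is that ``$\equiv$'' is preserved under $\d$: i.e., if $S \equiv S'$ then $\d S \equiv \d S'$, so that applying $\d$ to the semantic equality $\d^{k-1}\phi(T)\equiv\phi(\d^{k-1}T)$ is valid. This is not stated as a standalone lemma in the excerpt, but it follows immediately from the definitions: two semantically equivalent terms have the same interpretation under every assignment, hence the same difference quotients, hence the same partial derivatives up to $\equiv$, and $\d$ is assembled from partial derivatives over the free variables — one should note the free-variable sets of $S$ and $S'$ may differ, but Lemma \ref{variablesubsetlemma} lets one compute $\d S$ and $\d S'$ over a common superset of variables, reducing to the partial-derivative case. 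I would either insert a half-line remark to this effect or, more cleanly, observe that everywhere-total-differentiability and the value of $\d$ (up to $\equiv$) depend only on the semantic equivalence class of a term, so the whole statement is ``$\equiv$-invariant.''

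The main obstacle is really just bookkeeping the differentiability hypotheses correctly: I must check that when I invoke Theorem \ref{mainthm} on the term $\d^{k-1}T$, all of its hypotheses hold — strong differentiability of $\d^{k-1}T$ comes from the $\ell=k-1$ case of the corollary's hypothesis, and strong differentiability of $\phi_0(v)$ and ``$\phi_0$ respects $d$'' are unchanged. There is no deep content here; the work is entirely in making sure the induction is set up so that each application of Theorem \ref{mainthm} is licensed, and in recording the $\equiv$-invariance of $\d$.
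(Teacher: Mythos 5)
Your proposal is correct and takes essentially the same route as the paper, whose entire proof is ``by repeated applications of Theorem \ref{mainthm}''; your induction on $k$ is just that iteration written out. The one point you add beyond the paper --- that $\equiv$ is preserved by $\d$ (and that everywhere-total-differentiability is $\equiv$-invariant), needed to pass from $\d^{k-1}\phi(T)\equiv\phi(\d^{k-1}T)$ to $\d^{k}\phi(T)\equiv\d\,\phi(\d^{k-1}T)$ --- is a genuine detail the paper leaves implicit, and you handle it correctly via Lemma \ref{variablesubsetlemma} and the semantic definition of the partial derivatives.
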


\begin{proof}
    By repeated applications of Theorem \ref{mainthm}.
\end{proof}

In Sections \ref{funsection} and \ref{faadibrunosecn} we used a
special case of Theorem \ref{mainthm}
which we will now formalize. Recall that a precalculus variable is one that
is not of the form $dv$ for any variable $v$.

\begin{definition}
    \label{substitutionrespectingdifferentialsdefn}
    (Variable substitution respecting differentials)
    Let $v$ be a precalculus variable, $U$ a term
    such that $\d^k U$ is strongly differentiable for all $k$.
    For every term $T$, we will define the \emph{result of substituting
    $U$ for $v$ in $T$ while respecting differentials}, written $T[v|U]$,
    as follows.
    First, we define $\phi_0:\mathscr V\to\mathscr T$ so that:
    \begin{enumerate}
        \item
        $\phi_0(v) = U$.
        \item
        For every $k>0$, $\phi_0(d^k v) = \d^k U$.
        \item
        For all variables $w$ not of either of the above two forms, $\phi_0(w)=w$.
    \end{enumerate}
    We define $T[v|U]$ to be $\phi(T)$ where $\phi$ is the extension of $\phi_0$
    to all terms (Definition \ref{extensiondefn}).
\end{definition}

\begin{corollary}
    \label{coolcorollary}
    (Abstract Chain Rule)
    Let $U,v$ be as in Definition \ref{substitutionrespectingdifferentialsdefn}.
    If term $T$ is strongly differentiable, then
    \[
        \d (T[v|U]) \equiv (\d T)[v|U].
    \]
\end{corollary}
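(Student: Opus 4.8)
The plan is to derive Corollary~\ref{coolcorollary} as an instance of Theorem~\ref{mainthm}, so the bulk of the work is to verify that the specific $\phi_0$ of Definition~\ref{substitutionrespectingdifferentialsdefn} satisfies the two hypotheses of the theorem: namely that $\phi_0(w)$ is strongly differentiable for every variable $w$, and that $\phi_0$ respects $d$. Granting these, Theorem~\ref{mainthm} immediately gives $\d\phi(T)\equiv\phi(\d T)$ for the strongly differentiable $T$, and unwinding the abbreviation $T[v|U]=\phi(T)$ (and likewise $(\d T)[v|U]=\phi(\d T)$, since $\d T$ is a term) yields exactly $\d(T[v|U])\equiv(\d T)[v|U]$.

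First I would check strong differentiability of $\phi_0(w)$ by cases according to the three clauses defining $\phi_0$. For $w=v$ we have $\phi_0(v)=U$, and for $w=d^k v$ with $k>0$ we have $\phi_0(d^kv)=\d^k U$; in both cases strong differentiability is part of the hypothesis ``$\d^k U$ is strongly differentiable for all $k$'' (reading $\d^0 U$ as $U$). For every other variable $w$, $\phi_0(w)=w$, and a single variable is trivially strongly differentiable since its only subterm is itself and $\d w\equiv dw$ exists. Next I would verify that $\phi_0$ respects $d$, i.e.\ that $\phi_0(dw)\equiv\d\phi_0(w)$ for every variable $w$. Again split on the form of $w$: if $w=d^k v$ for some $k\ge 0$ (including $w=v$, the $k=0$ case), then $dw=d^{k+1}v$, so $\phi_0(dw)=\d^{k+1}U=\d(\d^k U)\equiv\d\phi_0(w)$ directly from the definition of $\phi_0$ on the differential variables over $v$. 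If $w$ is not of this form, then $dw$ is also not of this form (by Unique Readability, $dw=d^{k+1}v$ would force $w=d^k v$), so $\phi_0(dw)=dw$ and $\phi_0(w)=w$, and we need $dw\equiv\d w$; but $\d w=dw$ by the definition of the total differential of a single variable, so this holds.

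With both hypotheses in hand, I would invoke Theorem~\ref{mainthm} with this $\phi_0$ and the given $T$ to obtain $\d\phi(T)\equiv\phi(\d T)$, then simply note that $\phi(T)$ is $T[v|U]$ and $\phi(\d T)$ is $(\d T)[v|U]$ by Definition~\ref{substitutionrespectingdifferentialsdefn}, completing the proof.

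I do not expect a serious obstacle here; the only point requiring a little care is the Unique-Readability argument in the ``respects $d$'' verification, ensuring that applying $d$ to a variable outside the list $v, dv, d^2v,\dots$ keeps it outside that list, so that $\phi_0$ acts as the identity on both $w$ and $dw$ and the required equivalence collapses to the definitional identity $\d w = dw$. Everything else is bookkeeping that follows immediately from the definitions and from Theorem~\ref{mainthm}.
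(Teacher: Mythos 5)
Your proof is correct and follows the same route as the paper: the paper simply asserts that $\phi_0$ ``evidently'' satisfies the hypotheses of Theorem \ref{mainthm} and concludes, whereas you spell out the two verifications (strong differentiability of each $\phi_0(w)$, and that $\phi_0$ respects $d$, using Unique Readability for the variables outside the chain $v, dv, d^2v,\ldots$). The only cosmetic point is that $\d w$ for a lone variable $w$ is the term $\frac{\partial w}{\partial w}\,dw$, which is semantically equivalent to $dw$ rather than literally equal to it; since ``respects $d$'' only demands equivalence, this changes nothing.
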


\begin{proof}
    If $\phi_0$ is as in Definition \ref{substitutionrespectingdifferentialsdefn}
    then evidently $\phi_0$ satisfies the hypotheses of
    Theorem \ref{mainthm}.
    The corollary then immediately follows from Theorem \ref{mainthm}.
\end{proof}

\begin{corollary}
    \label{iteratedcoolcorollary}
    (Iterated Abstract Chain Rule)
    Let $v, T, U$ be as in Corollary \ref{coolcorollary}.
    For all $k>0$, if $\d^\ell T$ is strongly differentiable for all $\ell<k$,
    then
    \[
        \d^k (T[v|U]) \equiv (\d^k T)[v|U].
    \]
\end{corollary}

\begin{proof}
    By repeated applications of Corollary \ref{coolcorollary}.
\end{proof}

\section*{Acknowledgments}

We gratefully acknowledge Bryan Dawson, Tevian Dray, and the reviewers for
generous comments and feedback.

\bibliographystyle{plain}
\bibliography{faa}

\end{document}